\documentclass[12pt]{amsart}
\usepackage[margin=1.1in]{geometry}
\usepackage{amssymb,amsthm,hyperref}
\usepackage{epsfig}
\usepackage[arrow]{xy}
\hypersetup{hidelinks}

\newtheorem{lem}{Lemma}
\newtheorem{lemma}[lem]{Lemma}

\newtheorem{theorem}[lem]{Theorem}
\newtheorem{definition}[lem]{Definition}

\newtheorem{conjecture}[lem]{Conjecture}

\theoremstyle{remark}
\newtheorem{remark}{Remark}

\numberwithin{lem}{section}

\numberwithin{equation}{section}

\def\A{{\mathcal A}}
\def\Z{{\mathbb Z}}
\def\L{{\mathcal L}}
\def\x{{\bf x}}
\def\N{{\mathcal N}}

\newcommand{\LP}{\L^{+}_{\N}}

\hypersetup{
  pdftitle={Positivity for graph Laurent phenomenon algebras},
  pdfauthor={Ross Glew},
  pdfsubject={Positivity for graph Laurent phenomenon algebras}
}

\title{Positivity for graph Laurent phenomenon algebras}

\author{Ross Glew}\address
 {School of Natural Sciences\\ Institute for Advanced Study\\ Princeton\\ NJ 08540 USA.}

\begin{document}

\begin{abstract}
Laurent phenomenon algebras were introduced by Lam and Pylyavskyy as a generalisation of cluster algebras. For a family associated with directed graphs, they showed that every cluster variable is a Laurent polynomial in the variables of any cluster and conjectured that these expansions have nonnegative coefficients. We prove this conjecture for all directed graphs without loops or multiple edges with respect to an arbitrary seed cluster. 
\end{abstract}

\maketitle

\section{Introduction}
The cluster algebras of Fomin and Zelevinsky~\cite{CA1}
are generated by \emph{cluster variables}, which are grouped into sets called \emph{clusters}. An operation known as \emph{mutation} replaces one variable in a cluster with another, allowing one to generate new clusters from old.

Laurent phenomenon (LP) algebras were introduced in~\cite{LP} as a generalisation of cluster algebras, with a more general notion of mutation. A particularly well-behaved family associated with directed graphs was subsequently introduced in~\cite{LP2}.

These graph LP algebras share several important features of cluster algebras. Perhaps the most fundamental is the Laurent phenomenon: every cluster variable can be expressed as a Laurent polynomial in the variables of any given cluster~\cite{LP2}. These expansions were further conjectured to have the following important property~\cite[Conjecture~7.1(2)]{LP2}. 
\begin{conjecture}
For every directed graph without loops or multiple edges, each cluster variable of the associated graph LP algebra is a Laurent polynomial with nonnegative coefficients in the variables of any cluster.
\end{conjecture}
Partial results for tree graphs were previously obtained in~\cite{BCKZ,BCKZ2}. Related aspects of graph LP algebras have also been studied in~\cite{GM,DKW}. 

The aim of this paper is to prove the Laurent positivity conjecture in full generality for graph LP algebras. The proof relies on two
manifestly positive identities involving products of cluster variables:
an $XY$-identity already established in~\cite{LP2} and a new
$YY$-identity\footnote{The $YY$-identity was suggested by ChatGPT following discussions on related topics.} proved here. Taken together, these two identities lead to a straightforward inductive proof of positivity.

The paper is organised as follows. In Section~\ref{sec:nested}, we introduce strongly connected subsets and nested collections, which characterise the clusters of graph LP algebras. In Section~\ref{sec:graph-lp}, we define the cluster variables explicitly in terms of acyclic functions on the graph. In Section~\ref{sec:id}, we present two subtraction-free identities that form the basis of the positivity proof. In Section~\ref{sec:positivity}, we use these identities to prove the Laurent positivity.

\section{Nested sets}\label{sec:nested}

Let $\Gamma$ be a directed graph on $[n]=\{1,\ldots,n\}$
without loops or multiple edges. A nonempty subset
$I\subseteq[n]$ is \emph{strongly connected} if its induced
subgraph $\Gamma[I]$ is strongly connected. For
$I\subseteq[n]$, let $\operatorname{SCC}(I)$ denote the strongly connected
components of $\Gamma[I]$, with $\operatorname{SCC}(\varnothing)=\varnothing$. We will make use of the notation $Si=S\cup\{i\}$ and $Sij=S\cup\{i,j\}$.

\begin{definition}
A collection $\N$ of strongly connected subsets is nested if
\begin{enumerate}
\item for any $I,J\in\N$, either $I\subseteq J$,
      $J\subseteq I$, or $I\cap J=\varnothing$;
\item any pairwise disjoint members of $\N$ are the strongly
      connected components of their union.
\end{enumerate}
\end{definition}
The \emph{support} of $\N$ is defined as $N=\bigcup_{I\in\N}I$.
We say that $\N$ is \emph{maximal on its support}, or simply
\emph{maximal}, if no further
strongly connected subset of $N$ can be added while preserving nestedness. Maximal nested collections can be constructed iteratively using an \emph{activation sequence} as we now review.

\begin{definition} 
An \emph{activation sequence} is an ordered list $(i_1,\ldots,i_r)$ of distinct vertices, with associated nested collections $(\N_0,\ldots,\N_r)$ defined recursively as follows. Set $\N_0=\varnothing$. Subsequently, obtain $\N_a$ from $\N_{a-1}$ by adjoining the strongly connected component of $\Gamma[\{i_1,\ldots,i_a\}]$ containing $i_a$ as a new member.
\end{definition}
The support of $\N_a$ is $N_a=\{i_1,\ldots,i_a\}$ for $1\leq a\leq r$, and we set $N_0=\varnothing$. We say that the activation sequence \emph{yields} $\N$ if $\N_r=\N$.

\begin{lemma}[{\cite[Lemma~3.4]{LP2}}]
For any activation sequence $(i_1,\ldots,i_r)$, each
associated collection $\N_a$ is a maximal nested collection. Every maximal nested collection can be
obtained from an activation sequence.
\end{lemma}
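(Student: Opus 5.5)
The plan has three steps: show that each $\N_a$ is nested, then that it is maximal on $N_a$, and finally reconstruct an activation sequence from an arbitrary maximal $\N$. A preliminary observation used throughout is that the member added at step $a$, namely $C_a=$ the strongly connected component of $\Gamma[N_a]$ containing $i_a$, is a union of $\{i_a\}$ with some members of $\operatorname{SCC}(N_{a-1})$. This holds because the strongly connected components of $\Gamma[N_a]$ coarsen those of $\Gamma[N_{a-1}]$ together with $\{i_a\}$. By induction, the maximal elements of $\N_a$ are exactly the components $\operatorname{SCC}(N_a)$.

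\emph{Nestedness.} Argue by induction on $a$. For condition (1), take $I\in\N_{a-1}$. Then $I$ is contained in some component of $N_{a-1}$, and that component is either contained in $C_a$ or disjoint from it. For condition (2), take pairwise disjoint members of $\N_a$. If none of them is $C_a$, they already lie in $\N_{a-1}$ and the inductive hypothesis applies. If one of them is $C_a$, the others lie in $N_{a-1}\setminus C_a$. Suppose the union had a strongly connected component strictly larger than one of these sets. Such a component would be strongly connected inside $N_a$, hence contained in a single component of $\Gamma[N_a]$. It cannot meet both $C_a$ and something outside $C_a$, since $C_a$ is a full component of $\Gamma[N_a]$. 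So we are reduced to the collection without $C_a$, which is the inductive hypothesis.

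\emph{Maximality.} Suppose $J\subseteq N_a$ is strongly connected, $J\notin\N_a$, and $\N_a\cup\{J\}$ is nested. Let $b$ be minimal with $J\subseteq N_b$, so $i_b\in J$. Since $J$ is strongly connected in $N_b$, we have $J\subseteq C_b$, and $J\neq C_b$. Consider the members of $\N_{b-1}$ that are maximal among those contained in $C_b\setminus\{i_b\}$; these are the components of $N_{b-1}$ inside $C_b$. Each such member $K$ must be nested with $J$. It cannot contain $J$, because $i_b\notin K$. Hence $K\subseteq J$ or $K\cap J=\varnothing$, so $J$ is $\{i_b\}$ together with some of these $K$'s. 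Since $J\neq C_b$, some $K$ is disjoint from $J$. Now apply condition (2) to $J$ together with the disjoint $K$'s. Their union is $C_b$, which is strongly connected, so these pairwise disjoint sets cannot be its strongly connected components. This contradiction proves maximality.

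\emph{Converse.} Let $\N$ be maximal on its support $N$. Since each member is strongly connected, the inclusion-maximal members are pairwise disjoint. They cover $N$, and by (2) they are the components of $\Gamma[N]$. Choose a maximal member $I$. Then $\N'=\{J\in\N: J\neq I\}$ has support $N'$, and I claim $N'\supseteq I\setminus\{i\}$ for a unique vertex $i$. The first thing to try is that maximality forces the children of $I$ to cover all of $I$ except exactly one vertex. If two vertices $i,i'$ of $I$ were uncovered, one would try to add a strongly connected subset of $I$ (for instance, a component of $\Gamma[I\setminus\{i\}]$ containing $i'$) and check that nestedness is preserved, contradicting maximality. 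Next show that $\N'$ is maximal on $N'=N\setminus\{i\}$: any strongly connected $J\subseteq N'$ that could be added to $\N'$ could also be added to $\N$, since $J\not\ni i$ means $J\neq I$ and $J$ is nested with $I$ by containment or disjointness. Induction then gives an activation sequence yielding $\N'$, and appending $i$ yields $\N$. This step requires checking that the component of $N$ containing $i$ is exactly $I$, which holds because $I$ is a component of $\Gamma[N]$.

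I expect the main obstacle to be the counting claim that exactly one vertex of $I$ is uncovered by its children. Constructing the set to add and verifying condition (2) for it requires care with the strongly connected components of unions.
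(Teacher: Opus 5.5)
The paper gives no proof of this lemma; it quotes it from \cite[Lemma~3.4]{LP2}. Your argument therefore has to be judged on its own. The forward direction is correct as written: the description of $C_a$ as $\{i_a\}$ plus some components of $\Gamma[N_{a-1}]$, the inductive check of (1) and (2), and the maximality argument via the minimal $b$ with $J\subseteq N_b$ all go through. The converse has the right structure: remove a maximal member $I$, show its proper submembers cover $I$ minus exactly one vertex $i$, restrict to $N\setminus\{i\}$, induct on $|N|$, and append $i$. No idea is missing. However, the step you flag is left unverified, and two smaller checks are skipped.

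\emph{Exactly one uncovered vertex.} You only argue ``at most one.'' ``At least one'' comes from (2), not from maximality. The maximal members properly inside $I$ are pairwise disjoint, and if they covered $I$ they would be the components of the strongly connected graph $\Gamma[I]$, which is impossible. For ``at most one,'' your witness works.

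Suppose $i\neq i'$ in $I$ are both uncovered, and let $J$ be the component of $\Gamma[I\setminus\{i\}]$ containing $i'$. Then $J\notin\N$, since $i\notin J$ and no proper submember of $I$ contains $i'$.

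For (1), a member $K\subsetneq I$ is a strongly connected subset of $I\setminus\{i\}$. So it lies in one component of $\Gamma[I\setminus\{i\}]$, giving $K\subseteq J$ or $K\cap J=\varnothing$. Members not inside $I$ are disjoint from $I$.

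For (2), let $J,L_1,\dots,L_p$ be pairwise disjoint with union $W$, and let $Z\subseteq W$ be strongly connected. The maximal members are the components of $\Gamma[N]$, so $Z$ lies in one of them.
\begin{itemize}
\item If $Z\subseteq M$ with $M\neq I$, then $Z\subseteq\bigcup_{L_q\subseteq M}L_q$, and (2) for $\N$ puts $Z$ inside a single $L_q$.
\item If $Z\subseteq I$, then $Z\subseteq W\cap I\subseteq I\setminus\{i\}$, because each $L_q\subseteq I$ is a proper submember and misses $i$. Hence $Z$ lies in one component of $\Gamma[I\setminus\{i\}]$. Either $Z\subseteq J$, or $Z\subseteq\bigcup_{L_q\subseteq I}L_q$ and (2) for $\N$ again places it in one $L_q$.
\end{itemize}
So the components of $\Gamma[W]$ are exactly $J,L_1,\dots,L_p$, which contradicts maximality of $\N$.

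\emph{Maximality of $\N'$.} Your sentence only addresses condition (1). You also need (2) for $\N\cup\{J\}$ on disjoint families containing both $J$ and $I$. This holds because $I$ is a component of $\Gamma[N]$. For any $V\subseteq N$ containing $I$, the components of $\Gamma[V]$ are $I$ together with those of $\Gamma[V\setminus I]$, and the latter are controlled by nestedness of $\N'\cup\{J\}$. The same fact, not the absence of $i$ from $J$, is what makes $J$ either contained in or disjoint from $I$.

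A small correction: the maximal members of $\N$ are pairwise disjoint because of (1), not because they are strongly connected. With these additions your proof is complete.
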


The following property of an activation sequence will be
used in the positivity proof.

\begin{lemma}\label{lem:activation}
Given an activation sequence $(i_1,\ldots,i_r)$, every
strongly connected component of $\Gamma[N_a]$ belongs
to $\N_r$ for $0\leq a\leq r$.
\end{lemma}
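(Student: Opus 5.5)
The plan is to show that every strongly connected component $C$ of $\Gamma[N_a]$ is one of the sets adjoined at some step $b\le a$. Since $\N_0\subseteq\N_1\subseteq\cdots\subseteq\N_r$, this gives $C\in\N_a\subseteq\N_r$. For $a=0$ there is nothing to prove, because $\operatorname{SCC}(\varnothing)=\varnothing$. So fix $1\le a\le r$ and a component $C\in\operatorname{SCC}(N_a)$.

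Let $b$ be the largest index with $i_b\in C$; then $b\le a$ and $C\subseteq N_b\subseteq N_a$. Let $C'$ be the strongly connected component of $\Gamma[N_b]$ containing $i_b$, which is exactly the set adjoined to form $\N_b$. I will prove $C=C'$ by two inclusions.

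\emph{First, $C\subseteq C'$.} The set $C$ is strongly connected as an induced subgraph, lies inside $N_b$, and contains $i_b$. Hence $C$ is contained in the component of $\Gamma[N_b]$ through $i_b$, which is $C'$.

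\emph{Second, $C'\subseteq C$.} The set $C'\subseteq N_b\subseteq N_a$ is strongly connected in $\Gamma[N_b]$, so it remains strongly connected in the larger induced subgraph $\Gamma[N_a]$. It contains $i_b\in C$, so it lies in the component of $\Gamma[N_a]$ through $i_b$, which is $C$.

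Thus $C=C'\in\N_b\subseteq\N_r$. The only point needing care is the monotonicity of strong connectivity. An induced subgraph on a fixed vertex set is the same graph whether it is viewed inside $\Gamma[N_b]$ or inside $\Gamma[N_a]$, so "strongly connected subset" is intrinsic to the subset. A strongly connected subset containing a vertex $v$ always lies in the component of any ambient induced subgraph that contains it and $v$. I do not expect any real obstacle beyond stating this cleanly.
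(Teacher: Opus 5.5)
Your proof is correct, but it takes a different route from the paper.

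The paper does not analyse the components directly. It invokes the cited fact (Lemma~3.4 of \cite{LP2}) that $\N_a$ is nested. Axiom (1) then makes the inclusion-maximal members of $\N_a$ pairwise disjoint, and their union is the support $N_a$. Axiom (2) then says these maximal members are exactly the strongly connected components of $\Gamma[N_a]$.

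You bypass nestedness entirely and work from the definition of an activation sequence. You identify each component $C$ of $\Gamma[N_a]$ with the set adjoined at the step $b$ of its last-activated vertex, using the two inclusions $C\subseteq C'$ and $C'\subseteq C$. Both inclusions are sound: a vertex set inducing a strongly connected subgraph lies in the component, through any of its vertices, of every larger induced subgraph containing it.

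What each approach buys:
\begin{itemize}
\item The paper's argument is a two-line consequence of the cited lemma. It really proves something more general: for any nested collection, the inclusion-maximal members are the components of its support.
\item Your argument is self-contained and does not depend on the nestedness result from \cite{LP2}. It also says explicitly at which step of the sequence each component of $\Gamma[N_a]$ is adjoined.
\end{itemize}
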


\begin{proof}
The inclusion-maximal members of $\N_a$ are pairwise
disjoint and have union $N_a$. By nestedness, they are
precisely the strongly connected components of $\Gamma[N_a]$.
By construction, $\N_a\subseteq\N_r$, and the result follows.
\end{proof}

%%%%%%%%%%%%%%%%%%%%%%%%%%%%%%%%%%%%%%%%%%%%%%%%%%%%%%%%
%%%%%%%%%%%%%%%%%%%%%%%%%%%%%%%%%%%%%%%%%%%%%%%%%%%%%%%%
\section{Graph LP algebras}\label{sec:graph-lp}
We now turn to our main object of study, the graph Laurent phenomenon algebra $\A_\Gamma$ associated with the directed graph $\Gamma$. Its coefficient ring is $R=\Z[A_1,\ldots,A_n]$, where the $A_i$ are algebraically independent. We take $\A_\Gamma$ to be the normalized LP algebra with initial cluster variables $X_1,\ldots,X_n$, algebraically independent over $\operatorname{Frac}(R)$, and initial exchange polynomials
\[
F_i=A_i+\sum_{i\to j}X_j \qquad (i\in[n]).
\]
We restrict our attention to the characterisation of its cluster variables and clusters, which
provide all the structure needed for the positivity proof.
For a more detailed discussion, see~\cite{LP,LP2}.

The cluster variables of $\mathcal{A}_\Gamma$ are of two types: $X_i$ for $i\in[n]$,
and $Y_I$ for nonempty strongly connected subsets $I$.
The latter are defined through functions $Y_S$ associated
with arbitrary subsets $S\subseteq[n]$.

An \emph{acyclic function on $S$} is a map $f:S\to[n]$
such that, for every $i\in S$, either $f(i)=i$ or
$i\to f(i)$ is an edge of $\Gamma$, and the only directed
cycles among the arrows $i\mapsto f(i)$ are loops. Set $Y_\varnothing=1$. For nonempty $S\subseteq[n]$, the $Y_S$ functions are defined as
\[
   Y_S=\sum_{\substack{f:S\to[n]\\ f\text{ acyclic}}}
       \prod_{i\in S}\frac{\widetilde X_i(f)}{X_i},
   \qquad \qquad  \widetilde X_i(f)=
  \begin{cases}
    A_i, & f(i)=i,\\
    X_{f(i)}, & f(i)\ne i.
  \end{cases}
\]
The function $f(i)=i$ contributes
$\prod_{i\in S}A_i/X_i$, so $Y_S\ne0$ for every $S$.
\begin{lemma}[{\cite[Lemma~4.2]{LP2}}]
\label{lem:factorization}
Let $S\subseteq[n]$. Then $Y_S=\prod_{I\in\operatorname{SCC}(S)}Y_I.$
\end{lemma}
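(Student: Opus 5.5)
We prove the factorization by exhibiting a weight-preserving bijection between acyclic functions on $S$ and tuples of acyclic functions on the components. Let $I_1,\ldots,I_k$ be the strongly connected components of $\Gamma[S]$. Each summand of $Y_S$ is a product over $i\in S$ of factors $\widetilde X_i(f)/X_i$, and each such factor depends only on the value $f(i)$. So it suffices to show that restriction
\[
f\longmapsto (f|_{I_1},\ldots,f|_{I_k})
\]
is a bijection from acyclic functions on $S$ to $k$-tuples $(f_1,\ldots,f_k)$ with $f_m$ acyclic on $I_m$. The weight then factors as $\prod_m\prod_{i\in I_m}\widetilde X_i(f_m)/X_i$, and summing gives $Y_S=\prod_m Y_{I_m}$. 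If $S=\varnothing$, both sides are $1$.

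\textbf{Well-definedness.} Let $f$ be acyclic on $S$. Each $i\in I_m$ still has $f(i)=i$ or $i\to f(i)$ an edge, so $f|_{I_m}$ satisfies the edge condition. Any non-loop cycle among the arrows of $f|_{I_m}$ would also be a cycle of $f$. Hence $f|_{I_m}$ is acyclic.

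\textbf{Surjectivity.} Given acyclic $f_m$ on each $I_m$, define $f$ on $S$ by gluing the $f_m$. The edge condition holds pointwise. Suppose the arrows $i\mapsto f(i)$ contain a directed cycle $i_1\to i_2\to\cdots\to i_\ell\to i_1$ with $\ell\ge2$. On a cycle each vertex is followed by $f$ of itself, so all its vertices are in $S$ (a vertex outside $S$ has no outgoing arrow). Every arrow on the cycle is an edge of $\Gamma$ between vertices of $S$, so the cycle lies in $\Gamma[S]$. Its vertices are therefore mutually reachable in $\Gamma[S]$ and belong to a single component $I_m$. But then the cycle consists of arrows of $f_m$, contradicting acyclicity of $f_m$. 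So $f$ is acyclic.

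\textbf{Injectivity.} The components partition $S$, so $f$ is determined by its restrictions.

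\textbf{Main obstacle.} The only substantive step is the surjectivity argument: a cycle formed by arrows $i\mapsto f(i)$ must stay inside a single strongly connected component. This works because values of $f$ outside $S$ are sinks of the arrow structure, so no cycle can pass through them. It also uses that the acyclicity condition ignores the targets $f(i)\notin I_m$, which the definition allows since $f:S\to[n]$ may map outside $S$.
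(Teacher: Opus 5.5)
The paper does not prove this lemma; it imports it from \cite[Lemma~4.2]{LP2}, so there is no in-paper argument to compare with. Your restriction/gluing bijection is correct and complete, and it is the standard proof of this factorization. You handle the key point correctly: a non-loop cycle of $f$ has all its vertices in $S$ and uses only edges of $\Gamma[S]$, so it lies in one strongly connected component, where its arrows are arrows of a single $f_m$. You also rightly note that the weight $\widetilde X_i(f)/X_i$ depends only on $f(i)$.
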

The characterisation of the clusters of $\mathcal{A}_\Gamma$ was one of the main results of~\cite{LP2}.
\begin{theorem}[{\cite[Theorem~1.1]{LP2}}]
The clusters of $\A_\Gamma$ are precisely $\x_{\N}=\{Y_I:I\in\N\}\cup\{X_i:i\notin N\}$,
where $\N$ ranges over maximal nested collections.
\end{theorem}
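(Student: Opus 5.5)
The plan is induction along mutations, using the activation-sequence description of maximal nested collections, which is stated earlier as \cite[Lemma~3.4]{LP2}. There are three steps. First, for each maximal nested collection $\N$, I would write down explicit candidate exchange polynomials $F_v^{\N}$ for the variables of $\x_\N$. Second, I would show that these polynomials are compatible with LP mutation: mutating $(\x_\N,\{F^\N\})$ at any variable produces $(\x_{\N'},\{F^{\N'}\})$ for some maximal nested $\N'$. Third, I would show that the resulting exchange graph is connected and contains the initial seed $\N=\varnothing$, whose cluster is $\{X_1,\ldots,X_n\}$ with exchange polynomials $F_i$. Together these show that the seeds reachable from the initial seed are exactly the $\x_\N$.

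For the first step, the candidate polynomials come from the $Y_S$ themselves. A variable $X_i$ with $i\notin N$ gets an exchange polynomial read off from $F_i=A_i+\sum_{i\to j}X_j$. Each $X_j$ with $j\in N$ is replaced by the appropriate ratio of $Y$'s, and the result is then normalised as in \cite{LP}. For a variable $Y_I$ with $I\in\N$, let $J$ be the smallest member of $\N$ strictly containing $I$. Mutating $Y_I$ should replace $I$ by another strongly connected set $I'$ such that the two collections $\N\setminus\{I\}\cup\{I'\}$ and $\N$ are the only maximal nested collections containing $\N\setminus\{I\}$. Using activation sequences, these come from swapping the order of two consecutive activations. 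The exchange relation is then an identity of the form
\[
Y_I\,Y_{I'} = (\text{product of } Y\text{'s on smaller nested sets})\cdot(\text{polynomial in the other cluster variables}).
\]

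The basic tool for proving such identities is a recursion for $Y_{Si}$ obtained by splitting acyclic functions on $Si$ according to the value $f(i)$. If $f(i)=i$, the term contributes $A_i/X_i$ times a sum over acyclic functions on $S$. If $f(i)=j\neq i$, the term is restricted by the requirement that no cycle passes through $i$. Combining this recursion with the factorisation of Lemma~\ref{lem:factorization}, I would derive identities expressing $X_iY_{Si}$ and products like $Y_{Si}Y_{Sj}$ in terms of $Y_S$, $Y_{Sij}$ and the $X_k$. Lemma~\ref{lem:factorization} is what makes the strongly connected components of $\Gamma[N_a]$ appear; by Lemma~\ref{lem:activation}, these all lie in $\N$. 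These identities should be exactly the exchange relations for adjacent activation sequences.

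I expect the main obstacle to be the second step, namely checking the LP mutation rule for the exchange polynomials in full. In an LP algebra, mutating at one variable changes every other exchange polynomial by substitution followed by normalisation, which removes common factors. So it is not enough to verify each exchange relation in isolation. One must show that after substitution and after dividing out the right monomials and $Y$-factors, one lands exactly on $F^{\N'}$. My approach would be to reduce this to the case of a single transposition in the activation sequence, and to track which factors $Y_K$ with $K\in\N$ divide the substituted polynomial, again using Lemma~\ref{lem:factorization}. Connectivity in the third step is then comparatively easy. Any two activation sequences are related by adjacent transpositions and by adding or removing the final vertex, and the latter corresponds to exchanging $X_{i_r}\leftrightarrow Y_I$ where $I$ is the strongly connected component containing $i_r$.
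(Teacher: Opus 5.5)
This statement is not proved in the paper. It is quoted from \cite{LP2}, so there is no in-paper argument to compare with. Judged on its own, your outline has a reasonable architecture: explicit seeds for every $\N$, closure under LP mutation, and reachability from $\N=\varnothing$. However, it leaves out exactly the part that would constitute the proof, so there is a genuine gap.

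The exchange polynomials $F^{\N}_v$ are never defined and Step~2 is never carried out, yet both inclusions of the theorem depend on them. Even to see that mutating $\x_{\N_{a-1}}$ at $X_{i_a}$ produces $\x_{\N_a}$, you need the exchange polynomial of $X_{i_a}$ in that seed. To rule out other clusters, the family $\{(\x_\N,F^\N)\}$ must satisfy the LP seed axioms: each $F_v$ irreducible, not divisible by any cluster variable, and not involving $v$. It must also be closed under mutation, including the substitution, removal of common factors and renormalisation of every \emph{other} exchange polynomial.

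Your one concrete hint is not right as stated. Neither $X_j$ for $j\in N$, nor what the LP rule substitutes for it, is a ratio of $Y$'s in $\x_\N$: already for $\N=\{\{j\}\}$ the substitution into $F_i$ is $(A_j+\sum_{j\to k,\,k\neq i}X_k)/Y_{\{j\}}$.

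Nor are the identities you propose to derive, Lemmas~\ref{lem:exit} and~\ref{lem:local-YY}, exchange relations in $\x_\N$ as they stand. Their right-hand sides involve $P_S^{i,j}=\sum_\gamma Y_{S\setminus\gamma}$, and the strongly connected components of $S\setminus\gamma$ are typically not in $\N$. These terms are therefore genuinely Laurent in $\x_\N$; for the $2$-cycle $1\leftrightarrow 2$ and $\N=\{\{1\},\{1,2\}\}$ one already has $Y_{\{2\}}=(Y_{\{1,2\}}+1)/Y_{\{1\}}$. The actual content of the theorem is showing two things: that the relevant combinations become a monomial times an irreducible polynomial in $\x_\N\setminus\{v\}$, and that LP mutation carries $F^\N$ to $F^{\N'}$. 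Saying you will ``track which factors $Y_K$ divide'' only names that task.

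There is also a concrete error in your description of mutation at $Y_I$. Suppose $I$ is inclusion-maximal in $\N$, i.e.\ a strongly connected component of $\Gamma[N]$. Then there is no $J$, and no $I'\neq I$ makes $\N\setminus\{I\}\cup\{I'\}$ maximal nested on $N$. Instead $\N\setminus\{I\}$ is already maximal on $N\setminus\{i\}$, where $i$ is the unique vertex of $I$ not covered by smaller members, and the mutation must return $X_i$. Your Step~3 relies on this move, but your Step~1 excludes it.

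When $J$ exists, with uncovered vertex $j$, you can choose an activation sequence in which $i,j$ are consecutive. Then $I'$ is the strongly connected component of $\Gamma[J\setminus\{i\}]$ containing $j$; this is where your $J$ is actually needed. Your uniqueness claim for maximal nested collections containing $\N\setminus\{I\}$ also needs the restriction to support $N$, since larger supports give further extensions.

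Step~3 is fine, and simpler than you make it. The prefix chain $\N_0\subset\cdots\subset\N_r$ of a single activation sequence already joins $\N$ to $\varnothing$, so transpositions are needed only to describe the $Y\leftrightarrow Y$ exchanges.
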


Combining Lemma~\ref{lem:activation} with
Lemma~\ref{lem:factorization} gives the following property
used in the positivity proof.

\begin{lemma}\label{cor:prefix-monomial}
Let $(i_1,\ldots,i_r)$ be an activation sequence yielding
$\N$. Then $Y_{N_a}$ is a monomial in $\x_{\N}$ for every
$0\leq a\leq r$.
\end{lemma}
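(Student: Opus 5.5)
The plan is to combine Lemma~\ref{lem:factorization} with Lemma~\ref{lem:activation}, exactly as the sentence preceding the statement suggests. Fix $0\leq a\leq r$.

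\emph{The case $a=0$.} Here $N_0=\varnothing$, so $Y_{N_0}=Y_\varnothing=1$. This is the empty monomial in $\x_{\N}$, and there is nothing more to show.

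\emph{The case $a\geq 1$.}
\begin{enumerate}
\item Apply Lemma~\ref{lem:factorization} with $S=N_a$. This gives
\[
Y_{N_a}=\prod_{I\in\operatorname{SCC}(N_a)}Y_I,
\]
where $\operatorname{SCC}(N_a)$ is the set of strongly connected components of $\Gamma[N_a]$.
\item By Lemma~\ref{lem:activation}, each such component $I$ belongs to $\N_r=\N$.
\item By the definition of $\x_{\N}$, each factor $Y_I$ with $I\in\N$ is a cluster variable in $\x_{\N}$.
\end{enumerate}
Hence $Y_{N_a}$ is a product of elements of $\x_{\N}$. Since the components in $\operatorname{SCC}(N_a)$ are distinct, the factors are distinct and each appears with exponent one, so the product is in fact a squarefree monomial.

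There is essentially no obstacle. The only point to be careful about is that ``monomial in $\x_{\N}$'' is read so as to include the empty product in the case $a=0$. The rest is a direct chaining of the two cited lemmas, together with the observation that the activation sequence yields $\N=\N_r$.
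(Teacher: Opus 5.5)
Your proposal is correct and matches the paper's proof: factor $Y_{N_a}$ over $\operatorname{SCC}(N_a)$ via Lemma~\ref{lem:factorization}, then use Lemma~\ref{lem:activation} to place each component in $\N$. Treating $a=0$ separately and noting that the monomial is squarefree are harmless additions. The paper's argument already covers $a=0$, since $\operatorname{SCC}(\varnothing)=\varnothing$ gives the empty product.
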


\begin{proof}
By Lemma~\ref{lem:factorization} we have $Y_{N_a}=\prod_{I\in\operatorname{SCC}(N_a)}Y_I$. Lemma~\ref{lem:activation} gives
$\operatorname{SCC}(N_a)\subseteq\N$, so every factor
is a variable of $\x_{\N}$.
\end{proof}
\begin{remark}
The $Y_{N_a}$ will appear in denominators in the positivity proof. Since each $Y_{N_a}$ is a monomial in the variables of the chosen cluster, division by $Y_{N_a}$ preserves Laurent positivity.
\end{remark}
%%%%%%%%%%%%%%%%%%%%%%%%%%%%%%%%%%%%%%%%%%%%%%%%%%%%%%%%

\section{Two positive identities}
\label{sec:id}
We now introduce the two `$XY$' and `$YY$' identities which underlie the positivity proof. The $XY$-identity appeared already in~\cite{LP2}. The $YY$-identity proved below provides the crucial new ingredient for a straightforward inductive proof of positivity.

First we introduce some terminology. For vertices $i,j$ and a subset $S\subseteq[n]$, write
$\gamma:i\rightsquigarrow_S j$ for a simple directed path from $i$
to $j$ whose intermediate vertices lie in $S$. The endpoints need
not belong to $S$. The \emph{path polynomial} $P_S^{i,j}$ is defined as
\begin{equation}\label{eq:P-definition}
  P_S^{i,j}
  =\sum_{\gamma:i\rightsquigarrow_S j}Y_{S\setminus\gamma},
\end{equation}
where $S\setminus\gamma$ denotes the removal of all vertices of
$\gamma$ from $S$. If $i=j$, only the trivial path is allowed, so
$P_S^{i,i}=Y_{S\setminus\{i\}}$; in particular,
$P_S^{i,i}=Y_S$ when $i\notin S$. 
\begin{remark}
Each $P_S^{i,j}$ is a finite sum of $Y_B$ with $B\subseteq S$. This will be essential for the inductive positivity proof.
\end{remark}

We first recall two identities from~
\cite[Lemmas~4.7 and~4.13]{LP2}.
We refer to the first as the $XY$-identity.

\begin{lemma}[$XY$-identity]\label{lem:exit}
Let $S\subseteq[n]$ and $i\notin S$. Then
\begin{equation}\label{eq:exit}
  X_iY_{Si}
  =\sum_{j\notin Si}P_S^{i,j}X_j
   +\sum_{j\in Si}P_S^{i,j}A_j.
\end{equation}
\end{lemma}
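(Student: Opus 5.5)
We will prove the $XY$-identity by a direct combinatorial expansion. We clear denominators and compare both sides as sums over acyclic functions. By definition,
\[
X_iY_{Si}=X_i\sum_{f\text{ acyclic on }Si}\prod_{k\in Si}\frac{\widetilde X_k(f)}{X_k}
=\sum_{f}\widetilde X_i(f)\prod_{k\in S}\frac{\widetilde X_k(f)}{X_k}.
\]
So each acyclic $f$ on $Si$ contributes $\widetilde X_i(f)$ times a monomial indexed by the restriction of $f$ to $S$. Starting at $i$, we follow the arrows $i\mapsto f(i)\mapsto f^2(i)\mapsto\cdots$. Acyclicity guarantees that this walk is a simple path. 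It stays inside $Si$ until it either reaches a fixed point $j\in Si$ (where $f(j)=j$) or leaves $Si$ at a vertex $j\notin Si$. Call this path $\gamma:i\rightsquigarrow_S j$. Every intermediate vertex lies in $S$, and in the fixed-point case the endpoint $j$ lies in $Si$. (If $f(i)=i$, the path is trivial and $j=i$.)

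Next we factor the contribution of $f$ along $\gamma$. For consecutive vertices $k\to f(k)$ on $\gamma$ inside $Si$, the numerator $X_{f(k)}$ cancels the denominator of the next vertex, since $f(k)\in S$. The telescoping leaves:
\begin{itemize}
\item a factor $X_j$ when $j\notin Si$;
\item a factor $A_j/X_j\cdot X_j=A_j$ when $j\in Si$ is the fixed point, with the case $j=i$ giving $\widetilde X_i(f)=A_i$ directly.
\end{itemize}
The remaining vertices $S\setminus\gamma$ contribute $\prod_{k\in S\setminus\gamma}\widetilde X_k(f)/X_k$, with $f$ restricted to $S\setminus\gamma$.

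The key step is a bijection. Acyclic functions $f$ on $Si$ whose $i$-path is $\gamma$ should correspond exactly to pairs consisting of $\gamma$ and an acyclic function $g$ on $S\setminus\gamma$. Given such a pair, we define $f$ along $\gamma$ by following the path (with $f(j)=j$ if $j\in Si$), and set $f=g$ elsewhere.

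The main obstacle is checking that this $f$ is acyclic and that $g$ may point anywhere, including into $\gamma$ or outside $S$. Any cycle of $f$ through a vertex of $\gamma$ would have to stay on $\gamma$, since $f$ maps $\gamma$ forward along itself to an exit or a fixed point. So such a cycle is impossible. A cycle avoiding $\gamma$ is a cycle of $g$, which is excluded by acyclicity of $g$. Conversely, restricting any acyclic $f$ off $\gamma$ gives an acyclic $g$.

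We also need the path to be uniquely recovered from $f$ and to be simple. Both follow from acyclicity together with the definition of the walk. Note that simplicity of $\gamma$ is automatic, and the endpoint $j$ may equal a vertex of $S$ only in the fixed-point case.

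Summing over $g$ for fixed $\gamma$ then gives $Y_{S\setminus\gamma}$ times the endpoint factor. Summing over $\gamma$ and grouping by the endpoint $j$ yields $\sum_j P_S^{i,j}X_j$ over $j\notin Si$ plus $\sum_j P_S^{i,j}A_j$ over $j\in Si$. This is exactly \eqref{eq:exit}.
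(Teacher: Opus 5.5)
Your proof is correct and complete: splitting an acyclic function on $Si$ into its forward path $\gamma$ from $i$ and an acyclic function on $S\setminus\gamma$ is a genuine bijection, and the telescoping leaves exactly the weight $X_j$ (exit at $j\notin Si$) or $A_j$ (fixed point $j\in Si$), which yields the identity. The paper does not reprove this lemma but quotes it from \cite{LP2}, so there is no proof here to compare against; your path-decomposition argument is the natural direct proof of it.
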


\begin{lemma}\label{lem:local-path}
Let $S\subseteq[n]$, and let $i,j,k\in[n]$ be pairwise distinct
with $i,k\notin S$. Then
\begin{equation}\label{eq:local-path}
  Y_SP_{Sk}^{i,j}
  =Y_{Sk}P_S^{i,j}+P_S^{i,k}P_S^{k,j}.
\end{equation}
\end{lemma}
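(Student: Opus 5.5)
We prove \eqref{eq:local-path} by splitting the left-hand side according to how paths in $Sk$ interact with the new vertex $k$. Every simple path $\gamma:i\rightsquigarrow_{Sk}j$ either avoids $k$, in which case it is a path $i\rightsquigarrow_S j$, or passes through $k$ exactly once as an intermediate vertex. In the second case it factors uniquely as a concatenation $\gamma=\alpha\cdot\beta$ with $\alpha:i\rightsquigarrow_S k$ and $\beta:k\rightsquigarrow_S j$ that are vertex-disjoint apart from $k$. Conversely, every such disjoint pair concatenates to a simple path in $Sk$ through $k$. We therefore write
\[
P_{Sk}^{i,j}=\sum_{\gamma:i\rightsquigarrow_S j}Y_{Sk\setminus\gamma}
+\sum_{\substack{\alpha,\beta\\ \alpha\cap\beta=\{k\}}}Y_{S\setminus(\alpha\cup\beta)}.
\]
Multiplying by $Y_S$, the identity reduces to a statement about individual paths and pairs of paths.

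The key tool is a ``local'' multiplicative identity for $Y$: for $T\subseteq[n]$ and $k\notin T$, compare $Y_S\,Y_{Tk}$ with $Y_{Sk}\,Y_T$ where $T=S\setminus\gamma$. Expanding $Y_{Tk}$ over the choice of $f(k)$ gives
\[
X_kY_{Tk}=A_kY_T+\sum_{k\to m}X_m\,(\text{acyclic }f\text{ on }T\text{ with no path }m\to k),
\]
and the natural sign-free way to organise this is through the $XY$-identity (Lemma~\ref{lem:exit}) applied to $T$ and to $S$ with the vertex $k$. Concretely, I would apply Lemma~\ref{lem:exit} to $(S,k)$ and $(S\setminus\gamma,k)$. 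This expresses $X_kY_{Sk}$ and $X_kY_{(S\setminus\gamma)k}$ via $P^{k,\cdot}$ polynomials, and substituting these into $Y_SP^{i,j}_{Sk}-Y_{Sk}P^{i,j}_S$ shows that the difference, multiplied by $X_k$, equals $X_kP_S^{i,k}P_S^{k,j}$.

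A more combinatorial alternative, which I would try first, is to prove the identity directly at the level of acyclic functions. A term of $Y_SY_{Sk\setminus\gamma}$ is a pair of acyclic functions $(f,g)$ on $S$ and on $Sk\setminus\gamma$. A term of $Y_{Sk}Y_{S\setminus\gamma}$ is a pair $(f',g')$ on $Sk$ and on $S\setminus\gamma$. The extra terms are triples consisting of a path $\alpha:i\rightsquigarrow k$, a path $\beta:k\rightsquigarrow j$, and pairs of functions on $S\setminus\alpha$ and $S\setminus\beta$. The plan is a Lindstr\"om--Gessel--Viennot style path-swapping involution. View the functional digraph of $f$ (respectively $g$) as a forest whose roots are fixed points or exits from the domain. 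Follow the path from $k$ in $g$ and locate where it first meets $\gamma$ or leaves the set. Swapping the $f$- and $g$-arrows along the appropriate tail then exchanges which function contains $k$, or rewires $\gamma$ through $k$, turning it into $\alpha\cdot\beta$. The monomial $\prod \widetilde X/X$ is preserved because swapping arrows between the two factors does not change the multiset of arrows.

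The main obstacle is making the swap well defined and bijective. We must show that the swapped functions remain acyclic and that the cases separate cleanly. In the case where the $g$-tree from $k$ reaches $\gamma$, the swap produces the pair-of-paths terms, since the $k\to\cdots$ trail together with the two halves of $\gamma$ gives $\alpha$ and $\beta$. In the case where it does not, the swap produces the $Y_{Sk}Y_{S\setminus\gamma}$ terms. Both directions of the correspondence must be checked carefully, including disjointness $\alpha\cap\beta=\{k\}$; I expect this to take most of the work.
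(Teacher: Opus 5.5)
\textbf{There is a genuine gap: the proposal does not prove the lemma.} The paper does not prove \eqref{eq:local-path} either; it imports it from \cite[Lemma~4.13]{LP2}. So your argument has to stand on its own.

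\textbf{What is correct, and what remains.} Your splitting of $P_{Sk}^{i,j}$ into paths avoiding $k$ and concatenations $\alpha\cdot\beta$ with $\alpha\cap\beta=\{k\}$ is correct. After it, however, the identity is not termwise. Let $\gamma$ run over paths $i\rightsquigarrow_S j$ and $(\alpha,\beta)$ over pairs $i\rightsquigarrow_S k$, $k\rightsquigarrow_S j$. What remains to be shown is
\[
\sum_{\gamma}\bigl(Y_SY_{(S\setminus\gamma)k}-Y_{Sk}Y_{S\setminus\gamma}\bigr)
=\sum_{\alpha,\beta}Y_{S\setminus\alpha}Y_{S\setminus\beta}
-\sum_{\alpha\cap\beta=\{k\}}Y_SY_{S\setminus(\alpha\cup\beta)}.
\]
This is a global cancellation among single paths, intersecting pairs and disjoint pairs.

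\textbf{The algebraic route.} You substitute Lemma~\ref{lem:exit} for $X_kY_{Sk}$ and $X_kY_{(S\setminus\gamma)k}$. That only rewrites each bracket as $\sum_m\bigl(Y_SP^{k,m}_{S\setminus\gamma}c'_m-Y_{S\setminus\gamma}P^{k,m}_Sc_m\bigr)$, with $c_m,c'_m\in\{X_m,A_m\}$ disagreeing for $m\in\gamma\cap S$. Meanwhile the pair terms keep an explicit factor $X_k$. Evaluating differences $Y_{S\setminus\gamma}P^{k,m}_S-Y_SP^{k,m}_{S\setminus\gamma}$ is what Lemma~\ref{lem:iterated-path} does (for $m\notin S$), and the paper deduces that lemma \emph{from} \eqref{eq:local-path}. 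So your sentence asserting that the substitution yields $X_kP_S^{i,k}P_S^{k,j}$ restates the lemma rather than proving it.

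\textbf{The combinatorial route.} This is unfinished by your own account, and two of its stated ingredients fail. Take vertices $i,a,b,k,j$ with edges $i\to a$, $a\to k$, $k\to b$, $b\to j$, $a\to b$, $b\to a$, and $S=\{a,b\}$. Then \eqref{eq:local-path} reads $Y_{ab}Y_k+Y_{ab}=Y_{abk}+Y_aY_b$. The only terms of weight $1$ on either side are:
\begin{itemize}
\item $(a\mapsto k,\ b\mapsto a;\ k\mapsto b)$ in $Y_{ab}Y_k$, coming from $\gamma=iabj$ with the $g$-tree of $k$ reaching $\gamma$;
\item $(b\mapsto a;\ a\mapsto b)$ in $Y_bY_a=Y_{S\setminus\alpha}Y_{S\setminus\beta}$, with $\alpha=iak$ and $\beta=kbj$.
\end{itemize}
Any weight-preserving bijection must pair these, three function arrows against two. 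Hence:
\begin{itemize}
\item[(i)] The swap cannot preserve the multiset of arrows. Paths carry weight $1$ and only delete vertices from domains. The domain multisets $S\uplus(S\setminus\gamma)\uplus\{k\}$ and $(S\setminus\alpha)\uplus(S\setminus\beta)$ differ, since the latter never contains $k$.
\item[(ii)] The edge $a\to k$ of $\alpha$ is the arrow $f(a)=k$ of the function on $S$, an arrow \emph{into} $k$. So $\alpha$ cannot be assembled from the outgoing $g$-trail of $k$ and the two halves of $\gamma$.
\end{itemize}
In addition, the terms of your second sum have $k\notin\operatorname{dom}g$, so your case split does not apply to them. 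Nor do they match the disjoint pairs termwise: here $Y_{ab}\neq Y_aY_b$.

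\textbf{What a fix would need.} A bijective proof would need path edges to count as weighted arrows, for example via $(X_v/X_u)P^{u,v}_T$, where a path edge $p\to q$ carries $X_q/X_p$. After multiplying \eqref{eq:local-path} by $X_j/X_i$, all three products then live on the same domain multiset, and the two weight-one terms above have identical arrows. Even so, the involution and the acyclicity of its output would still have to be constructed and checked.
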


We will also use the following variant of
Lemma~\ref{lem:local-path}.

\begin{lemma}\label{lem:auxiliary-path}
Let $S\subseteq[n]$ and let $i,j,k\notin S$ be pairwise distinct.
Then
\begin{equation}\label{eq:auxiliary-path}
  Y_{Sj}P_{Sk}^{i,j}
  =Y_{Sjk}P_S^{i,j}
   +P_{Sj}^{i,k}P_S^{k,j}.
\end{equation}
\end{lemma}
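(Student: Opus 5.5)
The plan is to obtain \eqref{eq:auxiliary-path} directly from Lemma~\ref{lem:local-path}, applied to the enlarged set $S'=Sj$ in place of $S$, and then to rewrite each path polynomial using the observation that a path ending at $j$ never has $j$ as an intermediate vertex.

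First, check the hypotheses of Lemma~\ref{lem:local-path} for $S'=Sj$ with the same $i,j,k$. These are pairwise distinct, and $i,k\notin Sj$ because $i,k\notin S$ and both differ from $j$. Lemma~\ref{lem:local-path} puts no condition on whether $j$ lies in the set, so it applies and gives
\[
  Y_{Sj}P_{Sjk}^{i,j}=Y_{Sjk}P_{Sj}^{i,j}+P_{Sj}^{i,k}P_{Sj}^{k,j}.
\]

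Next, compare the path polynomials using \eqref{eq:P-definition}. Let $T$ be any set with $j\notin T$, and let $\gamma$ be a simple path ending at $j$ with intermediate vertices in $Tj$. Since $\gamma$ is simple and ends at $j$, the vertex $j$ is not an intermediate vertex, so the intermediate vertices already lie in $T$. Conversely, every path with intermediate vertices in $T$ qualifies. Moreover $j$ is a vertex of $\gamma$, so $Tj\setminus\gamma=T\setminus\gamma$. Hence $P_{Tj}^{a,j}=P_T^{a,j}$ for any start vertex $a$, including the trivial path when $a=j$, though that case is not needed here.

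Applying this with $T=Sk$, $a=i$ gives $P_{Sjk}^{i,j}=P_{Sk}^{i,j}$. Applying it with $T=S$ gives $P_{Sj}^{i,j}=P_S^{i,j}$ and $P_{Sj}^{k,j}=P_S^{k,j}$. Substituting these three equalities into the displayed identity yields exactly \eqref{eq:auxiliary-path}.

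There is no real obstacle. The only point needing care is confirming that Lemma~\ref{lem:local-path} as stated allows the target vertex $j$ to lie in the set; its hypotheses only require $i,k\notin S$, so it does. Also, $P_{Sj}^{i,k}$ is left unchanged: $j$ may genuinely appear as an intermediate vertex of a path from $i$ to $k$, so no reduction applies there.
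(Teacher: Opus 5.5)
Your proof is correct and follows the paper's argument: apply Lemma~\ref{lem:local-path} with base set $Sj$, endpoints $i,j$ and added vertex $k$. Then use $P_{Tj}^{a,j}=P_T^{a,j}$ (a simple path ending at $j$ has no intermediate $j$, and $j$ is removed from the weight) to simplify three of the path polynomials, while correctly leaving $P_{Sj}^{i,k}$ unchanged.
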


\begin{proof}
Apply Lemma~\ref{lem:local-path} with base set $Sj$,
endpoints $i,j$, and added vertex $k$. The result follows
from $P_{Tj}^{i,j}=P_T^{i,j}$ for $i\ne j$, since a simple path
ending at $j$ cannot contain $j$ as an intermediate vertex,
and $j$ is removed from the set indexing its weight.
\end{proof}

The goal for the remainder of this section is to generalise~\cite[Lemma~4.11]{LP2}, which we now recall.
\begin{lemma}\label{lem:local-YY}
Let $S\subseteq[n]$ and let $i,j\notin S$ be distinct. Then
\begin{equation}\label{eq:local-YY}
  Y_{Si}Y_{Sj}
  =Y_{Sij}Y_S+P_S^{i,j}P_S^{j,i}.
\end{equation}
\end{lemma}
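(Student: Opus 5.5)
The plan is a bijective proof on acyclic functions. Since \eqref{eq:local-YY} is homogeneous in the factors $1/X_m$, I first clear denominators. Every term on either side carries $\prod_{m\in S}X_m^{-2}\cdot X_i^{-1}X_j^{-1}$, except the path terms. There the vertices of $\gamma$ and $\delta$ are missing from the index sets, and their denominators are exactly compensated, arrow by arrow, by the path edges.

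So I will write each side as a generating function over pairs of functions:
\begin{itemize}
\item on the left, pairs $(f,g)$ with $f$ acyclic on $Si$ and $g$ acyclic on $Sj$;
\item on the right, pairs $(f',g')$ with $f'$ acyclic on $Sij$ and $g'$ acyclic on $S$, together with quadruples $(\gamma,\delta,f'',g'')$ with $\gamma:i\rightsquigarrow_S j$, $\delta:j\rightsquigarrow_S i$, $f''$ acyclic on $S\setminus\gamma$ and $g''$ acyclic on $S\setminus\delta$.
\end{itemize}
The weight of each object is the product of $\widetilde X_m$ over all arrows used, with a path arrow $m\to m'$ contributing $X_{m'}$. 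After this normalisation, the identity reduces to a weight-preserving bijection between the left-hand objects and the union of the two right-hand families. Each vertex of $S$ has two outgoing arrows in every family, and $i$ and $j$ have one each.

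The bijection is a ``swap along basins'' construction. Given $(f,g)$, follow the $f$-trajectory of $i$ and the $g$-trajectory of $j$. Let $B\subseteq S$ be the set of vertices of $S$ whose $g$-trajectory passes through $j$ before leaving $S$ or reaching a fixed point. Define $f'=f$ off $B\cup\{j\}$, $f'=g$ on $B\cup\{j\}$, and $g'=g$ off $B$, $g'=f$ on $B$. The weights are unchanged because only arrows are exchanged. One must check two things:
\begin{itemize}
\item $f'$ is acyclic on $Sij$. A cycle would have to use the new $g$-arrows out of $B\cup\{j\}$ together with old $f$-arrows, and this can only close up if the $f$-trajectory of $i$ enters $B\cup\{j\}$ while the $g$-trajectory of $j$ reaches $i$.
\item $g'$ is acyclic on $S$. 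This holds by a symmetric argument.
\end{itemize}
The inverse map swaps along the $g'$-basin of $j$ defined analogously, so the map is an involution-type bijection onto the pairs it reaches.

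The main obstacle is the exceptional case. There the $g$-trajectory of $j$ reaches $i$, giving a path $\delta:j\rightsquigarrow_S i$. Also the $f$-trajectory of $i$ reaches $j$, giving $\gamma:i\rightsquigarrow_S j$; if it first hits $B$ instead, I would try truncating at the first hit and concatenating with the $g$-path to $j$. I must show these pairs are exactly the ones matched with the quadruples $(\gamma,\delta,f'',g'')$, via $f''=f|_{S\setminus\gamma}$ and $g''=g|_{S\setminus\delta}$. I must also show that the pairs $(f',g')$ missed by the swap are precisely those not hit from the non-exceptional pairs.

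The delicate points are choosing the basin $B$ canonically so the map is reversible, and verifying that $\gamma,\delta$ are simple. Simplicity follows from acyclicity of $f$ and $g$. I expect the cleanest route is to fix the convention ``first vertex of the $f$-trajectory of $i$ lying in $B\cup\{j\}$'' and check the inverse map case by case.
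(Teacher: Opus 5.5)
The paper does not prove this lemma; it quotes it from \cite[Lemma~4.11]{LP2}. Your reduction to a weight-preserving bijection is correct, but the swap along the $g$-basin $B$ fails, and the failure is not confined to configurations involving $i$. Take $S=\{w\}$ with only the edges $w\to j$ and $j\to w$. Then $P_S^{i,j}=P_S^{j,i}=0$, so there should be no exceptional pairs at all.
\begin{enumerate}
\item \emph{Not well defined.} For $f(i)=i$, $f(w)=j$, $g(w)=g(j)=w$ you get $B=\varnothing$, and $f'$ contains the $2$-cycle $j\to w\to j$. In general $f'$ has a cycle whenever $g(j)\in Si\setminus B$ and the $f$-trajectory of $g(j)$ reaches $B\cup\{j\}$. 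So your claim that a cycle forces the $f$-trajectory of $i$ into $B\cup\{j\}$ and the $g$-trajectory of $j$ to reach $i$ is false.
\item \emph{Not injective.} The pair $f(w)=w$, $g(w)=g(j)=j$ has $B=\{w\}$. The pair $f(w)=j$, $g(w)=w$, $g(j)=j$ has $B=\varnothing$. Both are sent to $f'(w)=f'(j)=j$, $g'(w)=w$.
\item \emph{Not surjective.} The pair $f'(w)=w$, $f'(j)=j$, $g'(w)=j$ is never reached, so swapping along the $g'$-basin is not an inverse.
\item \emph{Exceptional case not weight-preserving.} Suppose $\gamma$ is an $f$-segment followed by a $g$-segment inside $B$. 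Each vertex $v$ of that $g$-segment lies in $B$, hence off $\delta$. With $f''=f|_{S\setminus\gamma}$ and $g''=g|_{S\setminus\delta}$, the arrow $g(v)$ is counted twice (in $\gamma$ and in $g''$) and $f(v)$ is lost.
\end{enumerate}

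What is missing is a swap region closed under both colours, together with a real argument for the linked case. Let $K$ be the connected component of $j$ in the undirected union of all non-loop $f$- and $g$-arrows, and suppose $i\notin K$. Recolour on $K$: set $f'=g$ on $K\cap Sj$, $f'=f$ on $Si\setminus K$, $g'=f$ on $K\cap S$, and $g'=g$ on $S\setminus K$. Both $f'$ and $g'$ are acyclic, because any directed cycle lies in one component and would be monochromatic. The union graph is unchanged, so this is an involution. It matches the left and right pairs with $i\notin K$. The pairs with $i\in K$ must then be matched with the right pairs having $i\in K$ together with the quadruples. That is where the content of the identity lies, and your proposal does not supply it.

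A complete route that avoids bijections is the determinantal one. Define $M$ by $M_{mm}=A_m+\sum_{m\to m'}X_{m'}$, $M_{mm'}=-X_{m'}$ for $m\to m'$, and zero otherwise. The matrix-forest theorem gives $\det M[S]=Y_S\prod_{m\in S}X_m$. The all-minors matrix-tree theorem gives $\det M[Si,Sj]=-X_j\prod_{m\in S}X_m\,P_S^{i,j}$, where the rows are $Si$ and the columns are $Sj$, with $S$ in a common order and $i,j$ last. Then \eqref{eq:local-YY} is exactly the Desnanot--Jacobi identity for $M[Sij]$.
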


The generalisation of the above $YY$-identity requires the following intermediate result.

\begin{lemma}\label{lem:iterated-path}
Let $S\subseteq T\subseteq[n]$ and let $i,j\notin T$ be distinct.
Then
\begin{equation}\label{eq:iterated-path}
  Y_SP_T^{i,j}
  =Y_TP_S^{i,j}
   +\sum_{k\in T\setminus S}
      P_{T\setminus\{k\}}^{i,k}P_S^{k,j}.
\end{equation}
\end{lemma}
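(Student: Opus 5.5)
Induction on $|T\setminus S|$, using Lemma~\ref{lem:local-path} as the one-step case. If $T=S$ the sum is empty and \eqref{eq:iterated-path} reads $Y_SP_S^{i,j}=Y_SP_S^{i,j}$. If $T=Sk$ with $k\notin S$, then $i,j,k$ are pairwise distinct and $i,k\notin S$, so Lemma~\ref{lem:local-path} gives
\[
Y_SP_{Sk}^{i,j}=Y_{Sk}P_S^{i,j}+P_S^{i,k}P_S^{k,j}.
\]
This is exactly \eqref{eq:iterated-path}, since $T\setminus\{k\}=S$.

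For the inductive step I pick $k\in T\setminus S$ and set $T'=T\setminus\{k\}$, so $S\subseteq T'$ and $T=T'k$. Applying Lemma~\ref{lem:local-path} with base set $T'$ (valid since $i,k\notin T'$ and $j\notin T$) gives
\[
Y_{T'}P_T^{i,j}=Y_TP_{T'}^{i,j}+P_{T'}^{i,k}P_{T'}^{k,j}.
\]
I multiply through by $Y_S$ and use the induction hypothesis for the pair $S\subseteq T'$ twice. With endpoints $i,j$ it gives
\[
Y_SP_{T'}^{i,j}=Y_{T'}P_S^{i,j}+\sum_{l\in T'\setminus S}P_{T'\setminus l}^{i,l}P_S^{l,j}.
\]
With endpoints $k,j$ (distinct and not in $T'$) it gives
\[
Y_SP_{T'}^{k,j}=Y_{T'}P_S^{k,j}+\sum_{l\in T'\setminus S}P_{T'\setminus l}^{k,l}P_S^{l,j}.
\]
Substituting and collecting terms:
\[
Y_{T'}\,Y_SP_T^{i,j}=Y_{T'}\bigl(Y_TP_S^{i,j}+P_{T'}^{i,k}P_S^{k,j}\bigr)+\sum_{l\in T'\setminus S}P_S^{l,j}\bigl(Y_TP_{T'\setminus l}^{i,l}+P_{T'}^{i,k}P_{T'\setminus l}^{k,l}\bigr).
\]
The $l=k$ term of the target sum, $P_{T'}^{i,k}P_S^{k,j}$, already appears in the first bracket. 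So after dividing by $Y_{T'}$ (nonzero), it remains to show for each $l\in T'\setminus S$ that
\[
Y_TP_{T'\setminus l}^{i,l}+P_{T'}^{i,k}P_{T'\setminus l}^{k,l}=Y_{T'}P_{T\setminus l}^{i,l}.
\]
Write $U=T'\setminus l$, so $T'=Ul$, $T=Ukl$ and $T\setminus l=Uk$. The required identity becomes
\[
Y_{Ul}P_{Uk}^{i,l}=Y_{Ukl}P_U^{i,l}+P_{Ul}^{i,k}P_U^{k,l}.
\]
This is exactly Lemma~\ref{lem:auxiliary-path} with base $U$ and the distinct vertices $i,l,k\notin U$. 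That closes the induction.

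The main obstacle is the bookkeeping. One must peel off a vertex $k$ and apply the induction hypothesis to both $P_{T'}^{i,j}$ and $P_{T'}^{k,j}$, so that every cross term carries a common factor $P_S^{l,j}$. Only then is the leftover coefficient recognisable as an instance of Lemma~\ref{lem:auxiliary-path}. One also has to check that the index conditions (distinctness, non-membership) hold at every invocation, and that dividing by $Y_{T'}$ is legitimate.
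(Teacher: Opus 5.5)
Your proof is correct and follows essentially the same route as the paper. You peel off $k\in T\setminus S$, apply Lemma~\ref{lem:local-path} with base $T'$ and added vertex $k$, use the induction hypothesis for $S\subseteq T'$ with endpoints $(i,j)$ and $(k,j)$, match the remaining terms via Lemma~\ref{lem:auxiliary-path} with base $T'\setminus\{l\}$, and cancel $Y_{T'}$. The only differences are that you expand $Y_{T'}Y_SP_T^{i,j}$ forward while the paper starts from $Y_{T'}$ times the right-hand side, and that your separate case $T=Sk$ is redundant (it is the inductive step with an empty sum).
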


\begin{proof}
The proof follows by induction on $|T\setminus S|$. For $T=S$, the identity is immediate. Suppose $T\ne S$. Choose $k\in T\setminus S$ and set $T'=T\setminus\{k\}$. Let $R$ denote the right-hand side of~\eqref{eq:iterated-path} and consider the product $Y_{T'} R$. Separating the term indexed by $k$, we have
\[
  Y_{T'}Y_TP_S^{i,j}+Y_{T'}P_{T'}^{i,k}P_S^{k,j}
    +\sum_{\ell\in T'\setminus S}
       \left(Y_{T'}P_{T\setminus\{\ell\}}^{i,\ell} \right)P_S^{\ell,j}=Y_{T'}R.
\]
Applying Lemma~\ref{lem:auxiliary-path} to the terms in parentheses, with base set $T'\setminus\{\ell\}$, endpoints $i,\ell$, and added vertex $k$, gives
\[
\begin{aligned}
  Y_TY_{T'}P_S^{i,j}
       +Y_{T'}P_{T'}^{i,k}P_S^{k,j}+\sum_{\ell\in T'\setminus S}
       \Bigl(Y_TP_{T'\setminus\{\ell\}}^{i,\ell}
          +P_{T'}^{i,k}P_{T'\setminus\{\ell\}}^{k,\ell}\Bigr)
          P_S^{\ell,j}=Y_{T'}R.
\end{aligned}
\]
Grouping terms according to their common factors of $Y_T$ and $P_{T'}^{i,k}$ gives
\[
\begin{aligned}
  Y_T\left(
       Y_{T'}P_S^{i,j}
       +\sum_{\ell\in T'\setminus S}
          P_{T'\setminus\{\ell\}}^{i,\ell}P_S^{\ell,j}
       \right)+P_{T'}^{i,k}\left(
       Y_{T'}P_S^{k,j}
       +\sum_{\ell\in T'\setminus S}
          P_{T'\setminus\{\ell\}}^{k,\ell}P_S^{\ell,j}
       \right)=Y_{T'}R.
\end{aligned}
\]
Since $S\subseteq T'$ and
$|T'\setminus S|=|T\setminus S|-1$, the induction hypothesis with endpoints $(i,j)$ identifies
the first parenthesis as $Y_SP_{T'}^{i,j}$.  It also applies with endpoints $(k,j)$, because $k,j\notin T'$ and $k\ne j$. Hence the second parenthesis is $Y_SP_{T'}^{k,j}$. This gives
\[
  Y_S\bigl(Y_TP_{T'}^{i,j}+P_{T'}^{i,k}P_{T'}^{k,j}\bigr)=Y_{T'}R.
\]
Applying Lemma~\ref{lem:local-path} with base set $T'$
and added vertex $k$ identifies the expression in parentheses as $Y_{T'}P_T^{i,j}$. It follows that $Y_{T'}Y_SP_T^{i,j}=Y_{T'}R$. Cancelling
$Y_{T'}\ne0$ proves~\eqref{eq:iterated-path} and completes
the induction.
\end{proof}

We now prove the new $YY$-identity, which will provide the mechanism for the inductive proof of positivity in the next section.

\begin{lemma}[$YY$-identity]\label{prop:YY}
Let $S\subseteq T\subseteq[n]$ and $k\notin T$. Then
\begin{equation}\label{eq:YY}
  Y_TY_{Sk}
  =Y_{Tk}Y_S
   +\sum_{i\in T\setminus S}
       P_{T\setminus\{i\}}^{k,i}P_S^{i,k}.
\end{equation}
\end{lemma}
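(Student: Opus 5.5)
Argue by induction on $|T\setminus S|$, following the pattern of the proof of Lemma~\ref{lem:iterated-path}. For $T=S$ the sum is empty and \eqref{eq:YY} reduces to $Y_SY_{Sk}=Y_{Sk}Y_S$, which is trivial. (When $|T\setminus S|=1$, say $T=Si$, the identity is exactly Lemma~\ref{lem:local-YY} with the roles of $i,j$ played by $i,k$, since $P_{S}^{k,i}P_S^{i,k}$ is the correction term there.) For the inductive step, suppose $T\neq S$, choose $i_0\in T\setminus S$, and set $T'=T\setminus\{i_0\}$. The induction hypothesis applied to $S\subseteq T'$ gives
\[
Y_{T'}Y_{Sk}=Y_{T'k}Y_S+\sum_{i\in T'\setminus S}P^{k,i}_{T'\setminus\{i\}}P_S^{i,k}.
\]

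The plan is to multiply the desired right-hand side $R$ of \eqref{eq:YY} by $Y_{T'}$ and show that $Y_{T'}R=Y_{T'}Y_TY_{Sk}$, then cancel $Y_{T'}\neq0$. First separate the term $i=i_0$ of $R$, which is $P_{T'}^{k,i_0}P_S^{i_0,k}$. For the remaining terms $i\in T'\setminus S$, rewrite $Y_{T'}P^{k,i}_{T\setminus\{i\}}$ by Lemma~\ref{lem:auxiliary-path}. This uses base set $T'\setminus\{i\}$, endpoints $k,i$ and added vertex $i_0$, all of which lie outside $T'\setminus\{i\}$. The result is
\[
Y_{T'}P^{k,i}_{T\setminus\{i\}}=Y_TP^{k,i}_{T'\setminus\{i\}}+P^{k,i_0}_{T'}P^{i_0,i}_{T'\setminus\{i\}}.
\]
Grouping by the common factors $Y_T$ and $P^{k,i_0}_{T'}$, the $Y_T$ group is
\[
Y_T\Bigl(Y_{T'k}Y_S+\sum_{i\in T'\setminus S}P^{k,i}_{T'\setminus\{i\}}P^{i,k}_S\Bigr)=Y_TY_{T'}Y_{Sk}
\]
by the induction hypothesis. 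The $Y_{T'}Y_{Tk}Y_S$ part of $Y_{T'}R$ is left over. The $P^{k,i_0}_{T'}$ group is
\[
P^{k,i_0}_{T'}\Bigl(Y_{T'}P_S^{i_0,k}+\sum_{i\in T'\setminus S}P^{i_0,i}_{T'\setminus\{i\}}P^{i,k}_S\Bigr).
\]
By Lemma~\ref{lem:iterated-path} with $S\subseteq T'$ and endpoints $i_0,k\notin T'$, the bracket equals $Y_SP^{i_0,k}_{T'}$.

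Hence
\[
Y_{T'}R=Y_TY_{T'}Y_{Sk}+Y_S\bigl(Y_{T'}Y_{Tk}-Y_TY_{T'k}+P^{k,i_0}_{T'}P^{i_0,k}_{T'}\bigr)\quad\text{(schematically).}
\]
Here I have to be careful, because the leftover $Y_{T'}Y_{Tk}Y_S$ term does not cancel against anything automatically. To handle it, apply Lemma~\ref{lem:local-YY} with base $T'$ and the two added vertices $i_0$ and $k$:
\[
Y_TY_{T'k}=Y_{Tk}Y_{T'}+P^{i_0,k}_{T'}P^{k,i_0}_{T'}.
\]
The bookkeeping I expect is the following. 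The $Y_T$ group should be matched with the term $-Y_TY_{T'k}Y_S$ rather than absorbed entirely. Concretely, I would instead write $Y_TY_{T'}Y_{Sk}$ from the induction hypothesis as $Y_T Y_{T'k}Y_S+Y_T\sum(\cdots)$, and then substitute Lemma~\ref{lem:local-YY} for $Y_TY_{T'k}$. This gives
\[
Y_{T'}Y_TY_{Sk}=Y_S\bigl(Y_{Tk}Y_{T'}+P^{i_0,k}_{T'}P^{k,i_0}_{T'}\bigr)+Y_T\sum_{i\in T'\setminus S}P^{k,i}_{T'\setminus\{i\}}P^{i,k}_S.
\]
The sum combines with the $P^{k,i_0}_{T'}$ group above, via Lemma~\ref{lem:auxiliary-path} and Lemma~\ref{lem:iterated-path}, to produce exactly $Y_{T'}R$.

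The main obstacle is this bookkeeping: making sure the three lemmas are applied with admissible hypotheses (distinctness, and the endpoints lying outside the base sets) and that the grouping closes up exactly. If it closes, cancelling $Y_{T'}$ finishes the induction.
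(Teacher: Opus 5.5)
Your proposal is correct and is essentially the paper's proof: the same induction on $|T\setminus S|$, the same multiplier $Y_{T'}$, and the same three ingredients (Lemmas~\ref{lem:auxiliary-path}, \ref{lem:iterated-path} and \ref{lem:local-YY}, with admissible hypotheses as you checked). The only difference is the order: you apply the induction hypothesis before Lemma~\ref{lem:local-YY}, which introduces a harmless subtraction, whereas the paper first rewrites $Y_{T'}Y_{Tk}+P_{T'}^{k,i_0}P_{T'}^{i_0,k}$ as $Y_TY_{T'k}$ and only then applies the induction hypothesis. Your bookkeeping does close: the display you labelled ``schematically'' is an exact identity, and its bracket $Y_{T'}Y_{Tk}-Y_TY_{T'k}+P^{k,i_0}_{T'}P^{i_0,k}_{T'}$ vanishes by Lemma~\ref{lem:local-YY}, giving $Y_{T'}R=Y_{T'}Y_TY_{Sk}$ directly. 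Correct your earlier claim that the $Y_T$ group ``is'' $Y_TY_{T'}Y_{Sk}$, since $Y_{T'}R$ contains no $Y_TY_{T'k}Y_S$ term.
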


\begin{proof}
The proof follows by induction on $|T\setminus S|$. For $T=S$, the identity is immediate. Suppose $T \neq S$. Choose
$i\in T\setminus S$ and set $T'=T\setminus\{i\}$. Let $R$ denote the right-hand side of~\eqref{eq:YY} and consider the product $Y_{T'}R$.
Separating the term indexed by $i$, we have
\[
  Y_{T'}Y_{Tk}Y_S+Y_{T'}P_{T'}^{k,i}P_S^{i,k}
    +\sum_{\ell\in T'\setminus S} \left( Y_{T'}
       P_{T\setminus\{\ell\}}^{k,\ell} \right)P_S^{\ell,k}=Y_{T'}R.
\]
Applying Lemma~\ref{lem:auxiliary-path} to the term in parentheses, with base set $T'\setminus\{\ell\}$, endpoints $k,\ell$, and added vertex $i$, gives
\[
\begin{aligned}
  Y_{T'}Y_{Tk}Y_S
       +Y_{T'}P_{T'}^{k,i}P_S^{i,k}+\sum_{\ell\in T'\setminus S}
       \Bigl(Y_TP_{T'\setminus\{\ell\}}^{k,\ell}
          +P_{T'}^{k,i}P_{T'\setminus\{\ell\}}^{i,\ell}\Bigr)
          P_S^{\ell,k}=Y_{T'}R.
\end{aligned}
\]
Grouping terms with the common factor $P_{T'}^{k,i}$, we have
\[
\begin{aligned}
  Y_{T'}Y_{Tk}Y_S
       +Y_T\sum_{\ell\in T'\setminus S}
          P_{T'\setminus\{\ell\}}^{k,\ell}P_S^{\ell,k}+P_{T'}^{k,i}\left(
       Y_{T'}P_S^{i,k}
       +\sum_{\ell\in T'\setminus S}
          P_{T'\setminus\{\ell\}}^{i,\ell}P_S^{\ell,k}
       \right)=Y_{T'}R.
\end{aligned}
\]
Applying Lemma~\ref{lem:iterated-path} with sets $S\subseteq T'$ and endpoints $i,k$ identifies the expression in parentheses as $Y_SP_{T'}^{i,k}$. This gives
\[
\begin{aligned}
  Y_S\bigl(
       Y_{T'}Y_{Tk}+P_{T'}^{k,i}P_{T'}^{i,k}
       \bigr) +Y_T\sum_{\ell\in T'\setminus S}
       P_{T'\setminus\{\ell\}}^{k,\ell}P_S^{\ell,k}=Y_{T'}R.
\end{aligned}
\]
Using~\eqref{eq:local-YY}, with base set $T'$
and added vertices $i,k$, the term in parentheses is recognised as $Y_TY_{T'k}$. Therefore, we have
\[
  Y_T\left(
       Y_{T'k}Y_S
       +\sum_{\ell\in T'\setminus S}
          P_{T'\setminus\{\ell\}}^{k,\ell}P_S^{\ell,k}
       \right)=Y_{T'}R.
\]
Since $S \subseteq T'$ and $|T'\setminus S|=|T \setminus S|-1$, the expression in parentheses is simply $Y_{T'}Y_{Sk}$ by induction. It follows that $Y_TY_{T'}Y_{Sk}=Y_{T'}R$. Cancelling
$Y_{T'}\ne0$ proves~\eqref{eq:YY} and completes
the induction.
\end{proof}
%%%%%%%%%%%%%%%%%%%%%%%%%%%%%%%%%%%%%%%%%%%%%%%%%%%%%%%%%%%%
%%%%%%%%%%%%%%%%%%%%%%%%%%%%%%%%%%%%%%%%%%%%%%%%%%%%%%%%%%%%
%%%%%%%%%%%%%%%%%%%%%%%%%%%%%%%%%%%%%%%%%%%%%%%%%%%%%%%%%%%%

\section{Laurent positivity}\label{sec:positivity}
We now prove that every $Y_S$ for $S\subseteq[n]$ and every $X_i$ for $i\in[n]$ has a Laurent expansion with nonnegative coefficients in the variables of any chosen cluster. This resolves~\cite[Conjecture~7.1(2)]{LP2}. We formulate the result precisely below.

Fix a maximal nested collection $\N$ with support $N$, and
let $\x_{\N}$ be its associated cluster. Define the semiring
of Laurent polynomials in $\x_{\N}$ with coefficients in
$\Z_{\geq0}[A_1,\ldots,A_n]$ by
\begin{equation}\label{eq:positive-semiring}
  \LP=\Z_{\geq0}[A_1,\ldots,A_n]
       \bigl[Y_I^{\pm1}:I\in\N,\;
             X_i^{\pm1}:i\notin N\bigr].
\end{equation}
This semiring is closed under addition, multiplication and
division by monomials in $\x_{\N}$. 

\begin{theorem}[Laurent positivity]\label{thm:all-functions}
For every $S\subseteq[n]$ and every $i\in[n]$, both $Y_S$
and $X_i$ belong to $\LP$.
\end{theorem}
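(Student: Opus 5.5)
We prove the theorem by strong induction on $|S|$, treating the statement for $Y_S$ and for $X_i$ together. Fix an activation sequence $(i_1,\ldots,i_r)$ yielding $\N$; such a sequence exists by the lemma quoted from \cite[Lemma~3.4]{LP2}. By Lemma~\ref{cor:prefix-monomial}, each $Y_{N_a}$ is a monomial in $\x_{\N}$, so it lies in $\LP$ and may be used as a denominator. The plan has three steps: reduce $Y_S$ to sets $S\subseteq N$, handle those sets with the $YY$-identity, and then obtain the $X_i$ from the $XY$-identity.

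\emph{Sets contained in $N$.} First take $S\subseteq N$. We use a secondary induction on the index $a$ such that $S\subseteq N_a$. If $S=\varnothing$ then $Y_S=1$. Otherwise choose $a$ minimal with $S\subseteq N_a$. Then $k:=i_a\in S$, and we write $S=S'k$ with $S'\subseteq N_{a-1}=:T$ and $k\notin T$. Lemma~\ref{prop:YY}, applied with $S'\subseteq T$, gives
\[
  Y_{S}=\frac{1}{Y_T}\Bigl(Y_{Tk}Y_{S'}+\sum_{i\in T\setminus S'}P_{T\setminus\{i\}}^{k,i}P_{S'}^{i,k}\Bigr).
\]
Here $Y_T=Y_{N_{a-1}}$ and $Y_{Tk}=Y_{N_a}$ are monomials by Lemma~\ref{cor:prefix-monomial}, and $|S'|<|S|$. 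Each path polynomial $P_{T\setminus\{i\}}^{k,i}$ or $P_{S'}^{i,k}$ is a nonnegative sum of $Y_B$ with $B\subseteq T=N_{a-1}$, and these are positive by the secondary induction on $a$. Thus every term is positive and $Y_S\in\LP$.

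\emph{Sets meeting the complement of $N$.} If $S\not\subseteq N$, pick $i\in S\setminus N$ and set $S_0=S\setminus\{i\}$. Rearranging Lemma~\ref{lem:exit} gives
\[
  Y_S=X_i^{-1}\Bigl(\sum_{j\notin S}P_{S_0}^{i,j}X_j+\sum_{j\in S}P_{S_0}^{i,j}A_j\Bigr).
\]
Since $X_i$ is a cluster variable, dividing by it is allowed. The $P_{S_0}^{i,j}$ involve only $Y_B$ with $|B|<|S|$, so they are positive by the outer induction. The $X_j$ with $j\notin N$ are cluster variables.

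The remaining difficulty is the $X_j$ with $j\in N$, which are not cluster variables and must first be shown positive. Take $j=i_a$ and apply the $XY$-identity with $S=N_{a-1}$:
\[
  X_j=\frac{1}{Y_{N_a}}\Bigl(\sum_{m\notin N_a}P_{N_{a-1}}^{j,m}X_m+\sum_{m\in N_a}P_{N_{a-1}}^{j,m}A_m\Bigr).
\]
The $P$'s are positive by the case $S\subseteq N$, which was proved without using the $X$'s. The right-hand side, however, involves $X_m$ for $m\in N\setminus N_a$. We therefore run a descending induction on $a$ from $r$ down to $1$: when $a=r$, every $m\notin N_r=N$ has $X_m$ a cluster variable, and for smaller $a$ the needed $X_m$ have larger activation index and are already known to be positive.

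Once all $X_j$ are positive, the case $S\not\subseteq N$ goes through by induction on $|S|$. I expect the main care point to be organising this order correctly: first all $Y_S$ with $S\subseteq N$, then all $X_j$, then all remaining $Y_S$. It must also be checked that the $YY$ step never needs a $Y_B$ with $B\not\subseteq N$, and this holds because $B\subseteq N_{a-1}$.
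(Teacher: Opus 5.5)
Your proof is correct and follows the same three-step plan as the paper: the $YY$-identity along the activation sequence for $S\subseteq N$, then the $X_j$ via the $XY$-identity, then the remaining $Y_S$ via the $XY$-identity divided by $X_i$ with $i\notin N$ (your induction on $|S|$ there works as well as the paper's on $|S\setminus N|$). The one deviation is in the $X_j$ step. The paper uses base set $N\setminus\{j\}$ instead of $N_{a-1}$, so the denominator is $Y_N$ and every $X_m$ on the right already has $m\notin N$; this makes your (valid) descending induction on $a$ unnecessary.
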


\begin{proof}
Choose an activation sequence $(i_1,\ldots,i_r)$ yielding
$\N$. We have $N_r=N$. By Lemma~\ref{cor:prefix-monomial}, $Y_{N_a}$ is a monomial in $\x_{\N}$ for every
$0\leq a\leq r$.

\medskip
\noindent\emph{Step 1: $Y$-functions supported in $N$.}
We prove by induction on $a$ that
\begin{equation}\label{eq:prefix-positivity}
  Y_S\in\LP\qquad\text{for every }S\subseteq N_a.
\end{equation}
For $a=0$, the only subset is $\varnothing$, for which
$Y_\varnothing=1$ and the claim is immediate. Suppose the claim holds for $N_{a-1}$,
and take $S\subseteq N_a$. If $i_a\notin S$, then
$S\subseteq N_{a-1}$ and $Y_S 
\in \LP$ by induction.
Otherwise, set $T=S\setminus\{i_a\}$, so that
$T\subseteq N_{a-1}$. Applying Lemma~\ref{prop:YY} to $T\subseteq N_{a-1}$ and the vertex $i_a$ gives
\begin{equation}\label{eq:positive-YY-recursion}
  Y_S=\frac{1}{Y_{N_{a-1}}}
       \left(
          Y_{N_a}Y_T
          +\sum_{j\in N_{a-1}\setminus T}
             P_{N_{a-1}\setminus\{j\}}^{i_a,j}
             P_T^{j,i_a}
       \right).
\end{equation}
Here $Y_T\in\LP$ by induction. As remarked below~\eqref{eq:P-definition},
every $Y_B$ occurring in
$P_{N_{a-1}\setminus\{j\}}^{i_a,j}$ has
$B\subseteq N_{a-1}\setminus\{j\}$, and every $Y_B$
occurring in $P_T^{j,i_a}$ has $B\subseteq T$.
Thus both path factors are sums of $Y$-functions supported
in $N_{a-1}$, and each belongs to $\LP$ by induction. Finally, $Y_{N_a}$ and $Y_{N_{a-1}}$ are monomials in
$\x_{\N}$. Therefore, the numerator of
\eqref{eq:positive-YY-recursion} belongs to $\LP$,
and division by $Y_{N_{a-1}}$ preserves this membership.
This proves~\eqref{eq:prefix-positivity}. Taking $a=r$,
we obtain
\begin{equation}\label{eq:support-positivity}
  Y_S\in\LP\qquad\text{for every }S\subseteq N. 
\end{equation}

\medskip
\noindent\emph{Step 2: the variables $X_i$.}
If $i\notin N$, then $X_i$ is a variable of the fixed
cluster. For $i\in N$, apply the $XY$-identity,
Lemma~\ref{lem:exit}, with $S=N\setminus\{i\}$. Since
$S\cup\{i\}=N$, this gives
\begin{equation}\label{eq:X-positivity-recursion}
  X_i=\frac{1}{Y_N}
       \left(
          \sum_{j\notin N}P_{N\setminus\{i\}}^{i,j}X_j
          +\sum_{j\in N}P_{N\setminus\{i\}}^{i,j}A_j
       \right).
\end{equation}
Every $Y_B$ occurring in these path polynomials is
supported in $N$, so each path polynomial belongs to
$\LP$ by~\eqref{eq:support-positivity}. Every $X_j$
in the first sum is a variable of $\x_{\N}$, and each
$A_j$ is an allowed coefficient. Also, $Y_N=Y_{N_r}$
is a monomial in $\x_{\N}$.
Hence~\eqref{eq:X-positivity-recursion} proves
\begin{equation}\label{eq:all-X-positive}
  X_i\in\LP\qquad\text{for every }i\in[n].
\end{equation}

\medskip
\noindent\emph{Step 3: all remaining $Y$-functions.}
We prove $Y_S\in\LP$ by induction on $|S\setminus N|$.
The base case, $S\subseteq N$,
is~\eqref{eq:support-positivity}. Otherwise, choose
$i\in S\setminus N$ and put $T=S\setminus\{i\}$.
Applying the $XY$-identity to $T$ and $i$ gives
\begin{equation}\label{eq:remaining-Y-recursion}
  Y_S=\frac{1}{X_i}
       \left(
          \sum_{j\notin S}P_T^{i,j}X_j
          +\sum_{j\in S}P_T^{i,j}A_j
       \right).
\end{equation}
For every $Y_B$ occurring in a path polynomial
$P_T^{i,j}$, we have $B\subseteq T$. Since $i\notin N$,
\[
  |B\setminus N|
  \leq |T\setminus N|
  =|S\setminus N|-1.
\]
The induction hypothesis therefore applies to every such
$Y_B$, and all the path polynomials
in~\eqref{eq:remaining-Y-recursion} belong to $\LP$.
Every $X_j$ belongs to $\LP$
by~\eqref{eq:all-X-positive}, so the numerator also
belongs to $\LP$. Since $i\notin N$, the denominator $X_i$ is a variable of the fixed cluster. Division by $X_i$ therefore
preserves membership in $\LP$. This completes the
induction and the proof.
\end{proof}

Every cluster variable is an $X_i$ or a $Y_I$ with $I$
strongly connected. Since $\N$ was arbitrary and every
cluster has the form $\x_{\N}$,
Theorem~\ref{thm:all-functions} establishes Laurent
positivity for $\A_\Gamma$.

\section*{Acknowledgements}
Funded by the European Union (ERC, UNIVERSE PLUS, 101118787). Views and opinions expressed are however those of the author(s) only and do not necessarily reflect those of the European Union or the European Research Council Executive Agency. Neither the European Union nor the granting authority can be held responsible for them. RG is supported by the Institute for Advanced Study and an anonymous donor. I would like to thank Thomas Lam and Pavlo Pylyavskyy.


\begin{thebibliography}{}

\bibitem[BCKZ]{BCKZ}
{\sc E.~Banaian, S.~Chepuri, E.~Kelley, and S.~W.~Zhang:}
Rooted clusters for graph LP algebras.
SIGMA Symmetry Integrability Geom. Methods Appl. 18 (2022),
Paper No.~089, 30~pp.

\bibitem[BCKZ2]{BCKZ2}
{\sc E.~Banaian, S.~Chepuri, E.~Kelley, and S.~W.~Zhang:}
Hydra graphs for graph LP algebras.
Ann. Comb. 30 (2026), 655--720.

\bibitem[DKW]{DKW}
{\sc G.~Z.~Dantas e Moura, R.~C.~Telekicherla Kandalam,
and D.~Woodruff:}
Cluster monomials in graph Laurent phenomenon algebras.
Algebraic Combinatorics 8 (2025), no.~4, 997--1019.

\bibitem[FZ]{CA1} {\sc S.~Fomin and A.~Zelevinsky:}
Cluster algebras. I. Foundations, {\sl J. Amer. Math. Soc.}, 15
(2002), no.~2, 497--529.

\bibitem[GM]{GM}
{\sc S.~S.~Gastineau and G.~Moreland:}
A binomial Laurent phenomenon algebra associated with the complete graph.
J. Algebraic Combin. 46 (2017), no.~3--4, 549--569.


\bibitem[LP1]{LP} {\sc T.~Lam and P.~Pylyavskyy:}
Laurent phenomenon algebras.
Cambridge J. Math. 4 (2016), no.~1, 121--162.

\bibitem[LP2]{LP2} {\sc T.~Lam and P.~Pylyavskyy:}
Linear Laurent phenomenon algebras.
Int. Math. Res. Not. IMRN 2016, no.~10, 3163--3203.


\end{thebibliography}
\end{document}